\newtheorem{theorem}{Theorem}[section]
\newtheorem{lemma}[theorem]{Lemma}
\newtheorem{corollary}[theorem]{Corollary}
\numberwithin{equation}{section}
\newcommand{\req}[1]{Eq.\,(\ref{#1})}
\begin{document}

\title{A Recursive Method for Computing Certain Bessel Function Integrals }

\author{Jeremiah Birrell$^{a}$\\
\vspace{-.1in}
\scriptsize{$^{a}$Department of Physics and Program in Applied Mathematics, The University of Arizona, Tucson, Arizona, 85721, USA} }

\maketitle

\begin{abstract}We investigate a family of integrals involving modified Bessel functions that arise in the context of neutrino scattering. Recursive formulas are derived for evaluating these integrals and their asymptotic expansions are computed.  We prove in certain cases that the asymptotic expansion yields the exact result after a finite number of terms.  In each of these cases we derive a formula that bounds the order at which the expansion terminates. The method of calculation developed in this paper is applicable to similar families of integrals that involve Bessel or modified Bessel functions.
\end{abstract}

\section{Introduction}

In this paper we study linear combinations of the family of integrals
\begin{equation}\label{F_def}
F_\nu^N(z)=\int_0^\infty \cosh^N(t) K_\nu(z\cosh(t)) dt,\hspace{2mm}  \Re(z)>0,
\end{equation}
where the modified Bessel function of the second kind for $\Re(z)>0$ and $\nu\in\mathbb{C}$ is given by the integral
\begin{equation}\label{BesselK_def}
K_\nu(z)=\int_0^\infty\exp(-z\cosh(t))\cosh(\nu t)dt.
\end{equation}
See, for example, \cite{Watson} for a detailed treatment of the theory of Bessel functions. 

The following special case is of particular interest
\begin{equation}\label{G_def}
G_\nu^N(z)=\int_0^\infty \cosh^N(t) K_\nu(z\cosh(t))\sinh^2(t) dt,\hspace{2mm}  \Re(z)>0.
\end{equation}
Note that $G_\nu^N$ can be obtained from the $F^N_\nu$ by the linear combination
\begin{equation}
G_\nu^N(z)=F^{N+2}_{\nu}(z)-F^N_\nu(z).
\end{equation}
Integrals of the form \req{F_def} appear in the computation of the reaction rate for electron-positron annihilation into neutrinos, as outlined in Appendix \ref{app:neutrino}.

The integral \req{F_def} is related to the Meijer G-function  function and  Mathematica~\cite{Mathematica} is capable of expressing the general case \req{F_def} in terms of generalized hypergeometric functions. In addition, a related integral formula is well known, see equation $13\!\cdot\! 72$ (1) in \cite{Watson} or equation 2.16.13.2 in \cite{Prudnikov}.
\begin{lemma}\label{Watson_int2}
Let  $\Re(z)>0$ and $a,b\in\mathbb{C}$.  Then
\begin{equation}\label{K_prod_formula}
2\int_0^\infty K_{a+b}(2z\cosh(t))\cosh((a-b)t)dt=K_a(z)K_b(z).
\end{equation}
\end{lemma}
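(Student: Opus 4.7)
My plan is to compute $K_a(z)K_b(z)$ as a double integral and reduce it to the left-hand side of \req{K_prod_formula} via a linear change of variables. As a preliminary step, I would rewrite \req{BesselK_def} as an integral over all of $\mathbb{R}$: since $e^{-z\cosh(s)}$ is even in $s$, splitting $\cosh(\nu s)=\tfrac12(e^{\nu s}+e^{-\nu s})$ and folding yields
\begin{equation*}
K_\nu(z)=\frac{1}{2}\int_{-\infty}^\infty e^{-z\cosh(s)+\nu s}\,ds.
\end{equation*}
Applying this to each factor of $K_a(z)K_b(z)$ on the right-hand side of \req{K_prod_formula} gives
\begin{equation*}
K_a(z)K_b(z)=\frac{1}{4}\int_{-\infty}^\infty\!\int_{-\infty}^\infty e^{-z(\cosh(s)+\cosh(u))+as+bu}\,ds\,du.
\end{equation*}

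The heart of the argument is then the substitution $s=t+w$, $u=t-w$ (Jacobian $2$), which exploits the identity $\cosh(t+w)+\cosh(t-w)=2\cosh(t)\cosh(w)$ to decouple the exponent. Combined with $as+bu=(a+b)t+(a-b)w$, the integrand becomes $e^{-2z\cosh(t)\cosh(w)}e^{(a+b)t}e^{(a-b)w}$. Because the kernel $e^{-2z\cosh(t)\cosh(w)}$ is even in $w$, I may replace $e^{(a-b)w}$ by $\cosh((a-b)w)$ without changing the integral. Using Fubini to carry out the $t$-integration first, the inner integral at each fixed $w$ is $\int_{-\infty}^\infty e^{-(2z\cosh(w))\cosh(t)+(a+b)t}\,dt=2K_{a+b}(2z\cosh(w))$, by the representation from the preliminary step applied with $z$ replaced by the positive-real-part quantity $2z\cosh(w)$ and $\nu=a+b$. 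A final appeal to evenness of the remaining $w$-integrand restricts the range to $[0,\infty)$ and produces \req{K_prod_formula}.

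The only delicate point is the interchange of integration, which I do not expect to be a serious obstacle. For $\Re(z)>0$, the estimate $\cosh(s)\ge e^{|s|}/2$ gives
\begin{equation*}
\bigl|e^{-z(\cosh(s)+\cosh(u))+as+bu}\bigr|\le \exp\!\bigl(-\tfrac12\Re(z)(e^{|s|}+e^{|u|})+\Re(a)s+\Re(b)u\bigr),
\end{equation*}
and the doubly-exponential decay dominates the exponential growth uniformly in $s,u$. Hence the double integral is absolutely convergent for all $a,b\in\mathbb{C}$, Fubini applies, and the linear change of variables is valid, completing the argument.
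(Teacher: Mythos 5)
Your argument is correct. Note that the paper does not prove this lemma at all: it is quoted as a known result with citations to Watson ($13\!\cdot\!72$ (1)) and Prudnikov et al., so your proposal supplies a proof where the paper offers only a reference. The route you take --- writing each factor as $K_\nu(z)=\tfrac12\int_{-\infty}^\infty e^{-z\cosh s+\nu s}\,ds$, forming the double integral, and decoupling via $s=t+w$, $u=t-w$ with $\cosh(t+w)+\cosh(t-w)=2\cosh t\cosh w$ --- is essentially the classical derivation of this product formula, and all the details check out: the Jacobian factor of $2$ combines with the two factors of $\tfrac12$ to give the constant in \req{K_prod_formula}; the symmetrization replacing $e^{(a-b)w}$ by $\cosh((a-b)w)$ is justified by the evenness of the kernel in $w$; the inner $t$-integral is legitimately identified as $2K_{a+b}(2z\cosh w)$ because $\Re(2z\cosh w)>0$ for real $w$; and your bound $\cosh s\ge e^{|s|}/2$ does give absolute convergence of the double integral for arbitrary $a,b\in\mathbb{C}$, which licenses both Fubini and the linear change of variables. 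This is a complete and self-contained proof of the lemma.
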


By expanding 
\begin{equation}\label{cosh_exp}
\cosh^N(t)=\sum_{k=0}^N  c_k\cosh(kt),
\end{equation}
\req{K_prod_formula} can be used to express $F_\nu^N(z)$ as a linear combination of products of modified Bessel functions.

 In this paper an alternative, and in our opinion simpler, method for computing \req{F_def} will be given  that also provides more detailed information about the structural form of the result in some cases.  In particular, we show how to compute these integrals recursively, using only the case $a=b$ from \req{K_prod_formula} and without explicitly performing the expansion \req{cosh_exp}.  We will also prove that if  $N$ is even and $\nu$ is odd or vice versa then $F^N_\nu(z)$ can be expressed as a finite linear combination of terms of the form $e^{-z}z^{-n}$, and compute the order $n$ at which these expansions terminate.  In general, the order is smaller than one would naively expect from combining the expansion \req{cosh_exp} with \req{K_prod_formula}.  More specifically,  the following theorem will be proven.
\begin{theorem}\label{main_theorem}
Let $N,\nu\in\{0,1,2...\}$. If $N$ is even and $\nu$ is odd or vice versa then
\begin{equation}\label{F_formula}
F_\nu^N(z)=z^{-1}e^{-z}\sum_{s=0}^{M_{N,\nu}}\left[\sum_{j+r=s}a_j\Gamma(r+1/2)\sum_{k+l=r}b_{k,N} c_{l,j}\right] z^{-s}
\end{equation}
where 
\begin{align}
M_{N,\nu}=&\begin{cases} \nu-1 &\mbox{if } N<\nu, \\ 
 N-1& \mbox{if } N>\nu \end{cases}
\end{align}
and
\begin{equation}\label{G_formula}
G_\nu^N(z)=z^{-2}e^{-z}\sum_{s=0}^{K_{N,\nu}}\left[\sum_{j+r=s}\!\!a_j\Gamma(r+3/2)\!\!\sum_{k+l=r}\!\!\!d_{k,N} c_{l,j}\right] z^{-s}
\end{equation}
where $K_{N,\nu}=M_{N+2,\nu}-1$.  

See \req{a_def} through \req{c_def} for the definition of the coefficients $a_j$, $b_{j,N}$, $d_{j,N}$, and $c_{l,j}$. 
\end{theorem}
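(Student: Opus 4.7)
The plan is to build up $F_\nu^N(z)$ recursively, starting from a closed-form base case and applying ladder operators in the $(N,\nu)$ lattice that preserve the opposite-parity constraint. For the base case take $N=0$ with $\nu$ odd, where Lemma \ref{Watson_int2} applied with $a=b=\nu/2$ gives $F_\nu^0(z)=\tfrac{1}{2}K_{\nu/2}(z/2)^2$; since $\nu/2$ is a half-integer, the elementary closed form
\begin{equation*}
K_{n+1/2}(z)=\sqrt{\pi/(2z)}\,e^{-z}\sum_{k=0}^{n}\frac{(n+k)!}{k!\,(n-k)!\,(2z)^{k}}
\end{equation*}
immediately expresses $F_\nu^0(z)$ as $z^{-1}e^{-z}$ times a polynomial of degree $\nu-1$ in $1/z$, matching the claimed form with $M_{0,\nu}=\nu-1$.

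Next I would derive two recurrences. Differentiating \req{F_def} under the integral with $K_\nu'=-\tfrac12(K_{\nu-1}+K_{\nu+1})$ yields $\tfrac{d}{dz}F_\nu^N=-\tfrac12\,(F_{\nu-1}^{N+1}+F_{\nu+1}^{N+1})$, while substituting the Bessel recurrence $K_{\nu+1}(w)-K_{\nu-1}(w)=(2\nu/w)K_\nu(w)$ into \req{F_def} gives $F_{\nu+1}^N-F_{\nu-1}^N=(2\nu/z)F_\nu^{N-1}$. Taking sum and difference at index $(N+1,\nu)$ produces the ladder operators
\begin{equation*}
F_{\nu\pm1}^{N+1}(z)=\Bigl[-\tfrac{d}{dz}\pm\tfrac{\nu}{z}\Bigr]F_\nu^N(z),
\end{equation*}
which raise $N$ by one, shift $\nu$ by $\pm 1$, and therefore preserve the opposite-parity condition. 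Every such $(N,\nu)$ is reached in $N$ ladder steps from the base case.

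The form $z^{-1}e^{-z}\sum_s A_sz^{-s}$ is preserved by these operators, since
\begin{equation*}
\Bigl[-\tfrac{d}{dz}\pm\tfrac{\nu}{z}\Bigr]\bigl(z^{-1-s}e^{-z}\bigr)=z^{-1-s}e^{-z}+(s+1\pm\nu)\,z^{-2-s}e^{-z}.
\end{equation*}
The precise cutoff $M_{N,\nu}$ is then established by induction on $N$, using two kinds of cancellation at the top of the expansion: a structural cancellation when the $-\nu/z$ operator acts on a term at $s=\nu-1$ (the factor $s+1-\nu$ vanishes), and a coefficient-level cancellation that forces $M$ to drop further, rooted in an algebraic identity satisfied by the base-case coefficients and propagated by the ladder. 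This produces $M_{N,\nu}=\nu-1$ in the regime $N<\nu$ and $M_{N,\nu}=N-1$ in the regime $N>\nu$. The explicit Cauchy-product structure $\sum_{j+r=s}a_j\Gamma(r+1/2)\sum_{k+l=r}b_{k,N}c_{l,j}$ emerges as the natural packaging of the coefficients: $c_{l,j}$ encodes the half-integer Bessel polynomial from the base case, $b_{k,N}$ accumulates the cosh-power data through successive ladder applications, and $a_j$ collects the prefactor contributions.

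The formula for $G_\nu^N$ follows from the linear combination $G_\nu^N=F_\nu^{N+2}-F_\nu^N$: the leading $s=0$ contribution cancels between the two $F$'s (giving the $z^{-2}$ prefactor and dropped cutoff $K_{N,\nu}=M_{N+2,\nu}-1$), while the identity $\Gamma(r+3/2)=(r+1/2)\Gamma(r+1/2)$ together with the natural relation between the cosh-power sequences $b_{k,N}$ and $d_{k,N}$ produces the shifted Gamma factor. The main obstacle is the bookkeeping in the induction step: proving that $M_{N,\nu}$ is the \emph{exact} cutoff, rather than merely an upper bound, requires the coefficient identities to be encoded cleanly, which is presumably the purpose of the definitions \req{a_def}--\req{c_def}. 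Verifying the precise Cauchy-product form, as opposed to merely the existence of a finite expansion of the claimed order, will be the most computationally delicate part of the argument.
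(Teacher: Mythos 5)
Your overall architecture (closed-form base case, ladder recursions in $(N,\nu)$, an induction controlling where the expansion terminates) matches the paper's in spirit, and your ladder operators $F^{N+1}_{\nu\pm1}=[-\tfrac{d}{dz}\pm\tfrac{\nu}{z}]F^N_\nu$ are correct (the $+$ case is exactly the paper's recursion \req{F_recursion2}; the paper instead pairs it with the second-order recursion \req{F_recursion} that raises $N$ by $2$ at fixed $\nu$). But there are two genuine gaps. First, the tight cutoff $M_{N,\nu}$ in the regime $N<\nu$ is the heart of the theorem, and you only assert it. The ``structural'' cancellation $s+1-\nu=0$ at $s=\nu-1$ kills one excess term, but keeping the degree at $\nu-1$ (rather than merely $\leq$ the naive bound) requires that the top two coefficients of the polynomial be \emph{equal} at every stage with $N<\nu$, and that this equality first fails exactly at $N=\nu$. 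That is a nontrivial propagation statement: the paper proves it by explicitly computing how the top two coefficients transform under both recursions, showing the defect $Q^{\nu-1}_{N,\nu}-Q^{\nu-2}_{N,\nu}$ is preserved up to sign under the $\nu$-raising step, and running an induction on $\nu$ (the quantity $L_\nu=\nu$ in Lemma \ref{deg_lemma}) anchored in the explicit base cases of Lemma \ref{F_leading_order}. Your proposal names the phenomenon but supplies neither the identity nor the argument that it propagates, so the exact value of $M_{N,\nu}$ is not established.

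Second, and more structurally, you have no mechanism that actually produces the explicit coefficient formula $\sum_{j+r=s}a_j\Gamma(r+1/2)\sum_{k+l=r}b_{k,N}c_{l,j}$. Claiming it ``emerges as the natural packaging'' of the ladder data is not a proof, and your interpretation is inconsistent with the definitions the theorem points to: $a_j$ are the coefficients in the large-$z$ asymptotic expansion of $K_\nu$ \req{a_def}, $b_{k,N}$ are Taylor coefficients of $(x+1)^N/\sqrt{x+2}$ at $x=0$, and $c_{l,j}$ are Taylor coefficients of $(x+1)^{-1/2-j}$ \req{c_def} --- none of these arise from the base-case Bessel polynomial or from ``accumulating cosh-power data through the ladder.'' The paper obtains this formula by an entirely separate computation: rewriting $F^N_\nu$ as $\int_0^\infty \frac{(x+1)^N}{\sqrt{x(x+2)}}K_\nu(z(x+1))\,dx$ and applying Watson's lemma / Laplace's method (Theorem \ref{asymp_approx}, Corollary \ref{FG_asymp}) to get the asymptotic series; the recursion argument is used only to prove that $F^N_\nu=z^{-1}e^{-z}Q_{N,\nu}(1/z)$ with the stated degree bound, and then \emph{uniqueness of asymptotic expansions} in the sequence $e^{-z}z^{-n}$ forces the polynomial coefficients to coincide with the Laplace-method coefficients. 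Without that uniqueness step (or a very heavy combinatorial computation tracking coefficients through $N$ ladder applications, which you do not attempt), the Cauchy-product form is unproven. The same issue affects your treatment of $G^N_\nu$: the $\Gamma(r+3/2)$ and $d_{k,N}$ come from applying the Laplace expansion directly to $\int_0^\infty\sqrt{x(x+2)}(x+1)^N K_\nu(z(x+1))\,dx$ (with $\lambda=3/2$), not from algebraically massaging $F^{N+2}_\nu-F^N_\nu$; the difference representation is needed only for the $s=0$ cancellation that yields the cutoff $K_{N,\nu}=M_{N+2,\nu}-1$.
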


Section \ref{sec:asymptotics} contains  necessary background material on asymptotic expansions, including explicit formulas for the asymptotic expansions of $F_\nu^N(z)$ and $G_\nu^N(z)$ as $z\rightarrow\infty$. Section \ref{sec:recursion} contains proofs of  several recursion relations that the family of integrals \req{F_def} obeys. Using these recursion relations, a proof of  Theorem \ref{main_theorem} will be given in section \ref{sec:poly_degree}. The case of interest to neutrino scattering is $N$ and $\nu$ both odd.  Although Theorem \ref{main_theorem} does not apply, the recursion relations derived in section \ref{sec:recursion} still allows for an  analysis of this case. This is discussed in Appendix \ref{app:neutrino}.


\section{Asymptotics}\label{sec:asymptotics} 

In this section we collect some needed results on asymptotic expansions of integrals with exponentially decaying integrands.  For these purposes, it is convenient to rewrite the integrals \req{F_def} and  \req{G_def} as   
\begin{align}\label{F_def2}
F_\nu^N(z)=&\int_0^\infty \frac{(x+1)^N}{\sqrt{x(x+2})} K_\nu(z(x+1)) dx,\\
G_\nu^N(z)=&\int_0^\infty \sqrt{x(x+2)}(x+1)^N K_\nu(z(x+1)) dx.
\end{align}
We will consider these integrals for $z,N,\nu\in\mathbb{R}$ as $z\rightarrow\infty$.

Laplace's method will be needed. See, for example, \cite{Olver}.
\begin{lemma}\label{laplace_integrals}
Let $q:(0,\infty)\rightarrow\mathbb{R}$ and $b>0$ such that $q\in L^1(e^{-zx}dx)$ for all $z\geq b$.  If
\begin{equation}
q\sim\sum_{n=0}^\infty a_nx^{(n+\lambda-\mu)/\mu} \text{ as } x\rightarrow 0
\end{equation}
for $\lambda,\mu>0$ then
\begin{equation}
\int_0^\infty e^{-zx}q(x)dx\sim \sum_{n=0}^\infty \Gamma\left(\frac{n+\lambda}{\mu}\right)\frac{a_n}{z^{(n+\lambda)/\mu}} \text{ as } z\rightarrow\infty.
\end{equation}
\end{lemma}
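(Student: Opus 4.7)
The plan is to prove this as a standard instance of Watson's lemma. The approach is to split the integral at a small but fixed $T>0$ into a tail contribution on $[T,\infty)$ and a contribution near the origin on $[0,T]$, treating each by a different mechanism. The near-origin piece will supply the claimed Gamma-function series, while the tail and the remainder after truncating the expansion will be shown to be negligible at the appropriate order.

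First I would dispose of the tail using the $L^1(e^{-bx}\,dx)$ hypothesis. Writing $e^{-zx}=e^{-(z-b)x}e^{-bx}$ on $[T,\infty)$ and pulling the supremum of the first factor out of the integral yields a bound of the form $O(e^{-(z-b)T})$, which vanishes faster than any negative power of $z$ as $z\to\infty$. Such a term contributes only beyond-all-orders corrections to any asymptotic series in inverse powers of $z$ and so can be absorbed into the error estimate at every stage below.

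Next, on $[0,T]$ I would use the assumed expansion to write $q(x)$ as the sum of its $N$-term partial sum and a remainder with the pointwise bound $|R_N(x)|\leq C_N\, x^{(N+\lambda-\mu)/\mu}$, valid after possibly shrinking $T$. For each monomial $x^{(n+\lambda-\mu)/\mu}$ in the partial sum I would extend the integration from $[0,T]$ back to $[0,\infty)$, again incurring only an $O(e^{-(z-b)T})$ error, and then invoke the standard Gamma integral
\begin{equation}
\int_0^\infty e^{-zx}x^{(n+\lambda-\mu)/\mu}\,dx = \Gamma\!\left(\tfrac{n+\lambda}{\mu}\right)z^{-(n+\lambda)/\mu},
\end{equation}
which produces exactly the $n$-th term of the claimed series. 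The hypothesis $\lambda,\mu>0$ ensures the exponent $(n+\lambda-\mu)/\mu$ exceeds $-1$ for every $n\geq 0$, so each integrand is integrable at the origin. The remainder on $[0,T]$ is majorized, via the same Gamma integral applied to the bound on $R_N$, by a constant times $z^{-(N+\lambda)/\mu}$, which matches the order of the first omitted term.

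There is no conceptual obstacle here; the work lies entirely in the bookkeeping of three error sources (the far tail on $[T,\infty)$, the tails added when completing each monomial integral back to infinity, and the near-origin remainder) and in verifying that each of them is $O(z^{-(N+\lambda)/\mu})$, since this is precisely the condition required by the definition of the asymptotic expansion. The care point worth flagging is that the constants $C_N$ and the value of $T$ may depend on $N$, but this is harmless because the asymptotic relation is stated for each fixed $N$ as $z\to\infty$.
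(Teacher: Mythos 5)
Your proposal is correct: it is the standard proof of Watson's lemma, and all the key points are in order --- the tail bound via the $L^1(e^{-bx}\,dx)$ hypothesis, the observation that $\lambda,\mu>0$ makes each exponent $(n+\lambda-\mu)/\mu$ exceed $-1$ so the Gamma integrals converge at the origin, and the remainder estimate $O(z^{-(N+\lambda)/\mu})$ matching the first omitted term. The paper does not prove this lemma at all; it simply cites Olver, and your argument is precisely the one found there, so there is nothing to compare beyond noting that your sketch would serve as a self-contained substitute for the citation.
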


In fact, we will also need a generalization of Laplace's method to integrals where the kernel $e^{-x}$ is replaced by another exponentially decaying function.  The general method for deriving an asymptotic expansion in this scenario is given by Olver in Section 9.6  of \cite{Olver}.  For the present purposes, the following variant is needed.  We include a proof for completeness.
\begin{lemma}
Let  $K:(0,\infty)\rightarrow \mathbb{R}$ have an asymptotic expansion
\begin{equation}
K(x)\sim x^\alpha e^{-\beta x}\sum_{n=0}^\infty \frac{a_n}{x^n} \text{ as } x\rightarrow\infty
\end{equation}
where $\alpha\in\mathbb{R}$, $\beta>0$ and $f\in O(e^{\sigma x})$ as $x\rightarrow\infty$ for some $\sigma>0$. If $K$ is bounded on compact subsets of $(0,\infty)$ (but not necessarily around $0$) and $f$ is integrable on bounded sets then for any $M>0$, and sufficiently large $z$
\begin{align}\label{K_exp}
&\int_0^\infty f(x)K(z(x+1))dx\\
=&\sum_{n=0}^{M-1}a_nz^{\alpha-n}e^{-\beta z}\int_0^\infty f(x)(x+1)^{\alpha-n}e^{-\beta zx}dx+O(z^{\alpha-M}e^{-\beta z}).\notag
\end{align}
If $f$  is also bounded on bounded sets then the error term can be improved to $O(z^{\alpha-M-1}e^{-\beta z})$.
\end{lemma}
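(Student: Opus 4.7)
The strategy is to replace $K$ in the integral by the truncated asymptotic expansion and estimate the remainder directly. By definition of the asymptotic expansion, for each $M$ there exist constants $Y_M, C_M > 0$ such that for $y \geq Y_M$
\begin{equation*}
\left|K(y)-y^\alpha e^{-\beta y}\sum_{n=0}^{M-1}\frac{a_n}{y^n}\right|\leq C_M\,y^{\alpha-M}e^{-\beta y}.
\end{equation*}
Once $z \geq Y_M$, the substitution $y = z(x+1) \geq z \geq Y_M$ is valid for every $x \geq 0$, so this estimate holds uniformly across the integration range. Plugging into $\int_0^\infty f(x) K(z(x+1))\,dx$ and factoring $z^{\alpha-n} e^{-\beta z}$ out of the $n$-th term produces the claimed explicit sum together with a remainder bounded by $C_M\, z^{\alpha-M} e^{-\beta z}\, I(z)$, where
\begin{equation*}
I(z) := \int_0^\infty |f(x)|\,(x+1)^{\alpha-M}\,e^{-\beta z x}\,dx.
\end{equation*}

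The problem therefore reduces to controlling $I(z)$. Fix $x_1 > 0$ large enough that $|f(x)| \leq A e^{\sigma x}$ for $x \geq x_1$, and split the integral at $x_1$. On the tail $[x_1, \infty)$ the integrand is bounded by $A(x+1)^{\alpha-M} e^{-(\beta z - \sigma) x}$, which is integrable and exponentially small in $z$ as soon as $z > \sigma/\beta$. On $[0, x_1]$, $(x+1)^{\alpha-M}$ is bounded, $e^{-\beta z x} \leq 1$, and $|f|$ is integrable by hypothesis, giving a uniform $O(1)$ contribution; together this proves the main estimate. Under the additional hypothesis that $f$ is bounded on bounded sets, $|f(x)|(x+1)^{\alpha-M}$ is bounded by a constant $B$ on $[0, x_1]$, and a direct integration gives $B\int_0^{x_1} e^{-\beta z x}\,dx \leq B/(\beta z)$. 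Combined with the exponentially small tail, $I(z) = O(1/z)$, and the overall remainder improves to $O(z^{\alpha-M-1} e^{-\beta z})$.

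The main subtlety is keeping the bounds uniform on the full half-line while simultaneously exploiting two hypotheses with opposite ranges of validity: the asymptotic expansion of $K$ requires its argument to be large, and the exponential bound on $f$ is available only in the tail. The first is automatic once $z$ is large, because $z(x+1) \geq z$ is then uniformly large; the second is handled by choosing $z > \sigma / \beta$, so that the decay $e^{-\beta z x}$ of the kernel dominates the growth $e^{\sigma x}$ of $f$ in the tail. With this arrangement the splitting argument goes through cleanly, and no cancellation between the two regions needs to be tracked.
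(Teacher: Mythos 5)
Your proof is correct and follows essentially the same route as the paper: truncate the expansion of $K$ (uniformly valid because $z(x+1)\geq z$), split the remainder integral at a finite point, use integrability of $f$ near the origin and the exponential bound in the tail, and gain the extra factor of $1/z$ from $\int_0^{x_1}e^{-\beta z x}dx$ when $f$ is locally bounded. The only step the paper makes explicit that you leave implicit is the preliminary check that all the integrals converge for large $z$, which in any case follows from the same splitting estimates.
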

\begin{proof}
 In this proof  we employ the notational device of using the same symbol for different constants used in bounding asymptotic behavior, here denoted by $C$.

First it must be shown that for sufficiently large $z$, both sides are defined.  For the left hand side, fix $a>0$ large enough so that $|f|<Ce^{\sigma x}$ and $|K|<Cx^\alpha e^{-\beta x}$ on $(a,\infty)$ and take $z>a,\sigma/\beta$.  Then 
\begin{align}
&\int_0^\infty| f(x)K(z(x+1))|dx\\
\leq& \int_0^a|f(x) K(z(x+1))|dx+C\int_a^\infty e^{-(\beta z-\sigma)x-\beta z}(z(x+1))^\alpha dx <\infty.\notag
\end{align}
The right hand side is similar.  Therefore all integrals are defined for $z$ sufficiently large.

Now compute
\begin{align}
&\left|\int_0^\infty f(x)K(z(x+1))dx-\sum_{n=0}^{N-1}a_nz^{\alpha-n}e^{-\beta z}\int_0^\infty f(x) (x+1)^{\alpha-n}e^{- \beta zx}dx\right|\notag\\
\leq& \int_0^\infty|f(x)\Delta^N(z(x+1))|dx, \hspace{2mm} \Delta^N(x)=K(x)-x^\alpha e^{-\beta x}\sum_{n=0}^{N-1} \frac{a_n}{x^n}.
\end{align}
Take $a$ sufficiently large so that $|\Delta^N(x)|<Cx^{\alpha-N}e^{-\beta x}$ and $|f(x)|<Ce^{\sigma x}$ on $(a,\infty)$ and let $z>a,\sigma/\beta$.  Then
\begin{align}
&\int_0^\infty|f(x)\Delta^N(z(x+1))|dx\\
\leq &C z^{\alpha-N}e^{-\beta z}\left(\int_0^a |f(x)|(x+1)^{\alpha-N} e^{-\beta zx}dx+\int_a^\infty (x+1)^{\alpha-N} e^{-(\beta z-\sigma)x}dx\right)\notag\\
\leq& C z^{\alpha-N}e^{-\beta z}.\notag
\end{align}
If $f$ is also bounded on bounded sets then for any $\epsilon>0$
\begin{align}
&\int_0^\infty|f(x)\Delta^N(z(x+1))|dx\notag\\
\leq& C z^{\alpha-N}e^{-\beta z}\left(\int_0^a  e^{-\beta zx}dx+\int_a^\infty (x+1)^{\alpha-N} e^{-(\beta z-\sigma)x}dx\right)\notag\\
\leq& C z^{\alpha-N}e^{-\beta z}\left(\frac{1}{z}+\int_a^\infty  e^{-(\beta z-\sigma-\epsilon)x}dx\right)\\
\leq& C z^{\alpha-N}e^{-\beta z}\left(\frac{1}{z}+\frac{1}{\beta z-\sigma-\epsilon}\right)\notag\\
\leq&  C z^{\alpha-N-1}e^{-\beta z}\notag
\end{align}
for $z$ sufficiently large.
\end{proof}

Finally, we need to be able to combine asymptotic expansions.
\begin{lemma}
Let $f,g:(0,\infty)\rightarrow\mathbb{R}$ such that
\begin{equation}
f\sim\sum_{n=0}^\infty a_nx^{(n+\lambda-\mu)/\mu}, \hspace{2mm} g\sim\sum_{n=0}^\infty b_nx^{(n+\beta-\mu)/\mu} \text{ as } x\rightarrow 0
\end{equation}
for $\mu,\lambda,\beta>0$. Then 
\begin{equation}
fg\sim \sum_{r=0}^\infty \left(\sum_{m+n=r} a_nb_m\right)x^{(r+\lambda+\beta-2\mu)/\mu} \text{ as } x\rightarrow0.
\end{equation}
\end{lemma}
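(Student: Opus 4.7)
The plan is to argue exactly as for the Cauchy product of power series: truncate both asymptotic expansions at a common level $M$, multiply out the resulting polynomial parts, and bound the three cross-term contributions.

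Concretely, fix a target truncation $R\geq 1$ for the product series; it suffices to show
\[
fg(x)-\sum_{r=0}^{R-1}\Bigl(\sum_{n+m=r} a_n b_m\Bigr)x^{(r+\lambda+\beta-2\mu)/\mu}=O\!\left(x^{(R+\lambda+\beta-2\mu)/\mu}\right)
\]
as $x\to 0$. Choose any integer $M\geq R$. Using the hypotheses on $f$ and $g$, I would write $f=P_M+\rho_M$ and $g=Q_M+\sigma_M$, where $P_M(x)=\sum_{n=0}^{M-1}a_nx^{(n+\lambda-\mu)/\mu}$, $Q_M(x)=\sum_{m=0}^{M-1}b_mx^{(m+\beta-\mu)/\mu}$, and the remainders satisfy $\rho_M(x)=O(x^{(M+\lambda-\mu)/\mu})$ and $\sigma_M(x)=O(x^{(M+\beta-\mu)/\mu})$. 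Multiplying gives the four pieces $P_MQ_M$, $P_M\sigma_M$, $\rho_M Q_M$, and $\rho_M\sigma_M$.

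The main step is to expand $P_MQ_M$:
\[
P_M(x)Q_M(x)=\sum_{r=0}^{2M-2}\Bigl(\sum_{\substack{n+m=r\\ n,m<M}}a_nb_m\Bigr)x^{(r+\lambda+\beta-2\mu)/\mu}.
\]
For $r\leq M-1$ the side condition $n,m<M$ is automatic, since $n,m\leq r<M$, so the coefficient is precisely $\sum_{n+m=r}a_nb_m$; these are the terms to keep. The terms with $M\leq r\leq 2M-2$ are each of order $x^{(r+\lambda+\beta-2\mu)/\mu}$ with $r\geq M\geq R$, so their sum is $O(x^{(R+\lambda+\beta-2\mu)/\mu})$.

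For the cross terms, the smallest exponent appearing in $P_M$ is $(\lambda-\mu)/\mu$ (at $n=0$), so $P_M(x)=O(x^{(\lambda-\mu)/\mu})$ as $x\to 0$; likewise $Q_M(x)=O(x^{(\beta-\mu)/\mu})$. Combined with the remainder bounds this gives $P_M\sigma_M=O(x^{(M+\lambda+\beta-2\mu)/\mu})$, $\rho_MQ_M=O(x^{(M+\lambda+\beta-2\mu)/\mu})$, and $\rho_M\sigma_M=O(x^{(2M+\lambda+\beta-2\mu)/\mu})$, all absorbed into the desired error order since $M\geq R$. Assembling these estimates yields the claimed asymptotic equality at order $R$, and since $R$ was arbitrary the full asymptotic expansion follows. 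No step poses a genuine obstacle; the only point requiring a moment of care is choosing $M$ large enough relative to $R$ so that the Cauchy-product coefficients captured in $P_MQ_M$ match those in the target series for every $r<R$, with all other contributions absorbed into the remainder.
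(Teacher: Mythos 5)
Your proof is correct: the paper states this lemma without proof (it is a standard fact about multiplying asymptotic expansions, in the spirit of Olver), and your Cauchy-product argument --- truncate both expansions at level $M\geq R$, note that for $r<M$ the constraint $n,m<M$ is vacuous so the coefficients of $P_MQ_M$ agree with the full Cauchy coefficients, and absorb the high-order terms of $P_MQ_M$ together with the three cross/remainder products into $O(x^{(R+\lambda+\beta-2\mu)/\mu})$ --- is exactly the standard way to establish it. The only point worth making explicit is that the bound $P_M(x)=O(x^{(\lambda-\mu)/\mu})$ holds because $\mu>0$ forces the exponents $(n+\lambda-\mu)/\mu$ to be increasing in $n$, so the $n=0$ term dominates as $x\to 0$; you use this implicitly and it is fine.
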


Together, the above lemmas give the following theorem.
\begin{theorem}\label{asymp_approx}
Let  $K:(0,\infty)\rightarrow \mathbb{R}$  be bounded on compact subsets of $(0,\infty)$ and have the asymptotic expansion
\begin{equation}
K\sim x^\alpha e^{-\beta x}\sum_{n=0}^\infty \frac{a_n}{x^n} \text{ as } x\rightarrow\infty
\end{equation}
where $\alpha\in\mathbb{R}$, $\beta>0$. Let $f\in O(e^{\sigma x})$ as $x\rightarrow\infty$ for some $\sigma>0$, be integrable on bounded sets, and have the asymptotic expansion
\begin{equation}
f\sim\sum_{n=0}^\infty b_nx^{n+\lambda-1} \text{ as } x\rightarrow 0
\end{equation}
for $\lambda>0$.  Then for any $M>0$ and $z$ sufficiently large,
\begin{align}\label{asymp_formula}
&\int_0^\infty f(x)K(z(x+1))dx\\
=&\sum_{s=0}^{M-1}\left[\sum_{j+r=s}\frac{a_j\Gamma(r+\lambda)}{\beta^{r+\lambda}}\sum_{k+l=r}b_k c_{l,j,\alpha} \right] z^{\alpha-\lambda-s}e^{-\beta z}+O(z^{\alpha-\lambda-M}e^{-\beta z})\notag
\end{align}
where
\begin{equation}
(x+1)^{\alpha-j}\sim \sum_{l=0}^\infty c_{l,j,\alpha}x^l, \hspace{2mm} c_{l,j,\alpha}=\frac{(\alpha-j)...(\alpha-j-(l-1))}{l!} \text{ as } x\rightarrow 0.
\end{equation}
\end{theorem}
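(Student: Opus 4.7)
The plan is to chain the three preceding lemmas in the natural order. First apply the generalized Laplace lemma to strip off the kernel $K(z(x+1))$, reducing the problem to a finite linear combination of integrals of the form $I_j(z)=\int_0^\infty f(x)(x+1)^{\alpha-j}e^{-\beta z x}\,dx$. Then for each $I_j$ use the product-of-expansions lemma to obtain the $x\to 0$ behavior of $f(x)(x+1)^{\alpha-j}$, and apply Laplace's method (the first lemma) to extract its asymptotic expansion. Finally reindex by the total order $s=j+r$ to collect the coefficient of $z^{\alpha-\lambda-s}e^{-\beta z}$, which should match the formula in the statement.

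Concretely, I would first apply the generalized Laplace lemma with truncation order $M'=M+\lceil\lambda\rceil$, obtaining $\int_0^\infty f(x)K(z(x+1))\,dx = \sum_{j=0}^{M'-1} a_j z^{\alpha-j}e^{-\beta z} I_j(z) + O(z^{\alpha-\lambda-M}e^{-\beta z})$. Next, the product-of-expansions lemma with $\mu=1$ applied to $f\sim\sum_k b_k x^{k+\lambda-1}$ and $(x+1)^{\alpha-j}\sim\sum_l c_{l,j,\alpha} x^l$ yields $f(x)(x+1)^{\alpha-j}\sim\sum_r\bigl(\sum_{k+l=r} b_k c_{l,j,\alpha}\bigr) x^{r+\lambda-1}$ as $x\to 0$. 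After the rescaling $y=\beta x$, Laplace's method produces $I_j(z)\sim \sum_r \Gamma(r+\lambda)\bigl(\sum_{k+l=r} b_k c_{l,j,\alpha}\bigr)\beta^{-(r+\lambda)} z^{-(r+\lambda)}$. Substituting this into the outer sum and switching to the index $s=j+r$ reproduces the stated formula for $0\le s\le M-1$.

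The remaining work is purely error bookkeeping. For each $j\le M-1$ I would truncate the inner expansion at $r=M-1-j$, incurring an error of order $z^{-(M-j+\lambda)}$ which, after multiplication by $z^{\alpha-j}e^{-\beta z}$, contributes $O(z^{\alpha-\lambda-M}e^{-\beta z})$. For the tail indices $M\le j\le M'-1$ the leading-order bound $I_j(z)=O(z^{-\lambda})$ times $z^{\alpha-j}e^{-\beta z}$ is likewise $O(z^{\alpha-\lambda-M}e^{-\beta z})$, and the outer remainder from the generalized Laplace lemma was arranged to have the same order by the choice of $M'$. The main obstacle is precisely this three-fold bookkeeping of error terms, together with tracking the factor $\beta^{-(r+\lambda)}$ correctly through the $y=\beta x$ rescaling so that it emerges in the exact form asserted in the theorem.
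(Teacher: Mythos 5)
Your proposal is correct and follows exactly the route the paper intends: the paper gives no written proof beyond the remark that ``the above lemmas give the following theorem,'' and your chaining of the generalized Laplace lemma, the product-of-expansions lemma, and Laplace's method, with reindexing by $s=j+r$, is precisely that argument. Your explicit handling of the error terms (in particular choosing the outer truncation $M'=M+\lceil\lambda\rceil$ to absorb the $O(z^{\alpha-M'}e^{-\beta z})$ remainder into $O(z^{\alpha-\lambda-M}e^{-\beta z})$) supplies bookkeeping the paper leaves implicit.
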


Applying Theorem \ref{asymp_approx} to $F_\nu^N$ and $G_\nu^N$ yields their asymptotic expansions.
\begin{corollary}\label{FG_asymp}
For $N,\nu\in\mathbb{R}$ the following asymptotic expansions hold as $z\rightarrow\infty$.
\begin{equation}\label{F_asymp}
F_\nu^N(z)=z^{-1}e^{-z}\sum_{s=0}^{M-1}\left[\sum_{j+r=s}a_j\Gamma(r+1/2)\sum_{k+l=r}b_{k,N} c_{l,j}\right] z^{-s}+O(z^{-1-M}e^{-z}),
\end{equation}
\begin{equation}\label{G_asymp}
G_\nu^N(z)=z^{-2}e^{-z}\sum_{s=0}^{M-1}\left[\sum_{j+r=s}a_j\Gamma(r+3/2)\sum_{k+l=r}d_{k,N} c_{l,j}\right] z^{-s}+O(z^{-2-M}e^{-z}),
\end{equation}
where $a_j$ are coefficients in the asymptotic expansion of $K_\nu$ \cite{Watson},
\begin{equation}\label{a_def}
K_\nu(x)=x^{-1/2}e^{-x}\bigg[\sum_{j=0}^{k-1}a_jx^{-j}+O(x^{-k})\bigg],\hspace{2mm} a_j=\sqrt{\frac{\pi}{2}}\frac{\prod_{l=1}^j(4\nu^2-(2l-1)^2)}{8^jj!},
\end{equation}
$b_{j,N}=g_N^{(j)}(0)/j!$ are the coefficients in the Taylor series about $x=0$ of 
\begin{equation}
g_N(x)=\frac{(x+1)^N}{\sqrt{x+2} },
\end{equation}
$d_{j,N}=h_N^{(j)}(0)/j!$ are the coefficients in the Taylor series about $x=0$ of 
\begin{equation}
h_N(x)=\sqrt{x+2}(x+1)^N,
\end{equation}
and
\begin{equation}\label{c_def}
 c_{l,j}\equiv c_{l,j,-1/2}=\frac{(-1/2-j)...(-1/2-j-(l-1))}{m!}
\end{equation}
are the coefficients in the Taylor series of $(x+1)^{-1/2-j}$ about $x=0$.
\end{corollary}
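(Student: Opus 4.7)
The plan is to derive both asymptotic expansions as direct specializations of Theorem \ref{asymp_approx}, applied to the rewritten integral representations of $F_\nu^N$ and $G_\nu^N$ from \req{F_def2}. The kernel in both cases is $K(x) = K_\nu(x)$, which by \req{a_def} has the required form $K_\nu(x) \sim x^{-1/2} e^{-x} \sum_{j\ge 0} a_j x^{-j}$, so throughout I take $\alpha = -1/2$ and $\beta = 1$. Since $K_\nu$ is continuous on $(0,\infty)$, it is bounded on compact subsets there, so the kernel hypotheses are immediate.

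For $F_\nu^N(z)$, I would write the $f$-factor as $f(x) = x^{-1/2} g_N(x)$, where $g_N(x) = (x+1)^N/\sqrt{x+2}$ is smooth at $x=0$ with Taylor coefficients $b_{k,N}$. This gives the expansion $f(x) \sim \sum_{k\ge 0} b_{k,N}\, x^{k-1/2}$ as $x \to 0$, which matches the hypothesis of Theorem \ref{asymp_approx} with $\lambda = 1/2$. One checks easily that $f$ is integrable near $0$ (since $x^{-1/2}$ is) and grows at most polynomially at infinity, hence lies in $O(e^{\sigma x})$ for every $\sigma > 0$. Substituting $\alpha = -1/2$, $\beta = 1$, $\lambda = 1/2$ into \req{asymp_formula} produces the prefactor $z^{\alpha - \lambda - s} e^{-\beta z} = z^{-1-s} e^{-z}$, the gamma factor $\Gamma(r+\lambda) = \Gamma(r+1/2)$, and the coefficients $c_{l,j,-1/2} = c_{l,j}$, yielding \req{F_asymp} after factoring $z^{-1} e^{-z}$ out of the sum.

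For $G_\nu^N(z)$, I would similarly write $f(x) = x^{1/2} h_N(x)$ with $h_N(x) = \sqrt{x+2}\,(x+1)^N$, whose Taylor expansion at $0$ has coefficients $d_{k,N}$. This gives $f(x) \sim \sum_{k\ge 0} d_{k,N}\, x^{k+1/2}$ as $x \to 0$, now matching the Theorem's hypothesis with $\lambda = 3/2$. This $f$ is bounded (hence integrable) near $0$ and again of polynomial growth at infinity, so the hypotheses hold. Plugging $\lambda = 3/2$ into \req{asymp_formula} produces $z^{-2-s} e^{-z}$ prefactors, $\Gamma(r+3/2)$, and the coefficients $d_{k,N}, c_{l,j}$, giving \req{G_asymp}.

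The only genuine work is bookkeeping: matching the generic symbols $(f, K, \alpha, \beta, \lambda, a_n, b_n, c_{l,j,\alpha})$ of Theorem \ref{asymp_approx} to the specific data of Corollary \ref{FG_asymp}, and verifying that the factorizations $f = x^{-1/2} g_N$ and $f = x^{1/2} h_N$ isolate a smooth factor at the origin whose Taylor coefficients are exactly $b_{k,N}$ and $d_{k,N}$. No step should present a real obstacle, since Theorem \ref{asymp_approx} has been arranged to package all of the analytic content; the main point to be careful about is the correct identification of $\lambda$ (namely $1/2$ versus $3/2$), which is what shifts the leading decay from $z^{-1} e^{-z}$ to $z^{-2} e^{-z}$ between the two cases.
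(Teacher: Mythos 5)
Your proposal is correct and follows exactly the route the paper takes: the paper gives no separate argument beyond the remark that Corollary \ref{FG_asymp} follows by applying Theorem \ref{asymp_approx} to the representations \req{F_def2}, with precisely the identifications $\alpha=-1/2$, $\beta=1$, and $\lambda=1/2$ (via $f=x^{-1/2}g_N$) or $\lambda=3/2$ (via $f=x^{1/2}h_N$) that you make. Your bookkeeping of the resulting prefactors, gamma factors, and coefficients is accurate, so nothing further is needed.
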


\section{Recursion Relations}\label{sec:recursion}
$F_\nu^N(z)$ is analytic on $\Re(z)>0$  and so it suffices to prove  Theorem \ref{main_theorem} for $z\in\mathbb{R}^+$. $z$ will be restricted to this domain from here on.   The decay of the integrand, uniform in $z$, allows us to differentiate $F_\nu^N(z)$ by differentiating under the integral.  This will yield a pair of recursive formulas for $F_\nu^N(z)$.

The derivations of the recursion relations do not depend on many of the details of the family of integrals in question, so we temporarily generalize to the family of integrals
\begin{equation}\label{H_def}
H_\nu^N(z)=\int_{t_i}^{t_f} r(t)^N C_\nu(zr(t)) w(t)dt
\end{equation}
where the only necessary assumptions are that the integrals are defined in the $L^1$ sense, differentiation commutes with the integral, and $C(x)$ is a solution of the Bessel (upper sign) or modified Bessel (lower sign) equations
\begin{equation}\label{bessel_eq}
z^2\frac{d^2C_\nu(z)}{dz^2}+z\frac{dC_\nu(z)}{dz}=(\nu^2\mp z^2)C_\nu(z)
\end{equation}
and satisfies the recursion relation
\begin{align}\label{C_increment}
\frac{d}{dz}[z^{-\nu}C_\nu(z)]=-z^{-\nu}C_{\nu+1}(z).
\end{align}
These conditions are satisfied by $F^N_\nu$ and $G^N_\nu$ with $C_\nu=K_\nu$ and taking the lower sign.

Using \req{bessel_eq} and differentiating under the integral gives  a recursion relation for $H_\nu^N$.
\begin{lemma}
\begin{equation}\label{F_recursion}
H^{N+2}_\nu(z)=\mp\left(\frac{d^2H_\nu^N(z)}{dz^2}+\frac{1}{z}\frac{d H_\nu^N(z)}{dz}-\frac{\nu^2}{z^2}H_\nu^N(z)\right).
\end{equation}
\end{lemma}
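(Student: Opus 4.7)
The plan is to differentiate $H_\nu^N(z)$ twice under the integral sign and then apply the Bessel (or modified Bessel) equation \req{bessel_eq} pointwise to the integrand. By hypothesis, differentiation commutes with integration, so I may freely interchange $\partial_z$ with $\int_{t_i}^{t_f}$.

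First I would compute, using the chain rule,
\begin{equation*}
\frac{\partial}{\partial z}C_\nu(zr(t))=r(t)\,C_\nu'(zr(t)),\qquad \frac{\partial^2}{\partial z^2}C_\nu(zr(t))=r(t)^2\,C_\nu''(zr(t)),
\end{equation*}
so that
\begin{equation*}
\left(\frac{d^2}{dz^2}+\frac{1}{z}\frac{d}{dz}-\frac{\nu^2}{z^2}\right)H_\nu^N(z)=\int_{t_i}^{t_f}r(t)^N w(t)\left[r(t)^2 C_\nu''(zr(t))+\frac{r(t)}{z}C_\nu'(zr(t))-\frac{\nu^2}{z^2}C_\nu(zr(t))\right]dt.
\end{equation*}

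The key observation is that the bracketed quantity is precisely $z^{-2}$ times the left-hand side of the Bessel equation \req{bessel_eq} evaluated at $u=zr(t)$. Indeed, writing $r(t)=u/z$ inside the bracket gives
\begin{equation*}
r(t)^2 C_\nu''(u)+\frac{r(t)}{z}C_\nu'(u)-\frac{\nu^2}{z^2}C_\nu(u)=\frac{1}{z^2}\bigl[u^2 C_\nu''(u)+u\,C_\nu'(u)-\nu^2 C_\nu(u)\bigr],
\end{equation*}
and \req{bessel_eq} reduces this to $\mp u^2 z^{-2} C_\nu(u) = \mp r(t)^2 C_\nu(zr(t))$.

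Substituting back and pulling the constant $\mp 1$ outside the integral yields
\begin{equation*}
\left(\frac{d^2}{dz^2}+\frac{1}{z}\frac{d}{dz}-\frac{\nu^2}{z^2}\right)H_\nu^N(z)=\mp\int_{t_i}^{t_f}r(t)^{N+2}C_\nu(zr(t))w(t)\,dt=\mp H_\nu^{N+2}(z),
\end{equation*}
which rearranges to the desired identity. The only technical point worth flagging is the justification for exchanging the two $z$-derivatives with the integral; this is granted by assumption, so the argument is essentially a one-line chain-rule computation plus one invocation of the Bessel equation. There is no real obstacle.
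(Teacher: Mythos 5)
Your computation is correct and is exactly the argument the paper intends: the lemma is stated with only the remark that it follows from differentiating under the integral and applying \req{bessel_eq}, which is precisely what you carry out. The sign bookkeeping ($\mp$ on both sides composing to $+$) also checks out.
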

This reduces the task of computing $F_\nu^N$ for non-negative integer $N$ to the base cases $N=0$ and $N=1$, after which the recursion can be used to compute the result for higher $N$.

Similarly, using \req{C_increment} and differentiating under the integral results in a second recursion relation.
\begin{lemma}
\begin{equation}\label{F_recursion2}
H^{N+1}_{\nu+1}(z)=\frac{\nu}{z}H^N_\nu(z)-\frac{d}{dz} H^N_\nu(z).
\end{equation}
\end{lemma}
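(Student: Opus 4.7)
The plan is to prove the identity by differentiating $H_\nu^N(z)$ under the integral sign and then using the recursion relation \eqref{C_increment} to rewrite the integrand in terms of $C_{\nu+1}$. Since differentiation commutes with the integral by the standing assumption on $H_\nu^N$, we have
\begin{equation*}
\frac{d}{dz} H_\nu^N(z) = \int_{t_i}^{t_f} r(t)^N \frac{d}{dz}\bigl[C_\nu(zr(t))\bigr] w(t)\, dt.
\end{equation*}

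Next, I would extract from \eqref{C_increment} a pointwise identity for $C_\nu'$. Expanding the left-hand side of \eqref{C_increment} by the product rule gives $C_\nu'(u) = (\nu/u) C_\nu(u) - C_{\nu+1}(u)$. Applying the chain rule at $u = z r(t)$ then yields
\begin{equation*}
\frac{d}{dz} C_\nu(zr(t)) = r(t)\, C_\nu'(zr(t)) = \frac{\nu}{z} C_\nu(zr(t)) - r(t)\, C_{\nu+1}(zr(t)).
\end{equation*}

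Substituting this back and recognizing the two resulting integrals as $H_\nu^N(z)$ and $H_{\nu+1}^{N+1}(z)$ respectively gives
\begin{equation*}
\frac{d}{dz} H_\nu^N(z) = \frac{\nu}{z} H_\nu^N(z) - H_{\nu+1}^{N+1}(z),
\end{equation*}
which rearranges to \eqref{F_recursion2}. There is no real obstacle here; the only subtlety is the interchange of differentiation and integration, which is granted by hypothesis, so the proof is essentially a one-line chain-rule computation once \eqref{C_increment} is differentiated to produce $C_\nu'$ explicitly.
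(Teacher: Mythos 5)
Your proof is correct and is exactly the argument the paper intends (the paper only remarks that the lemma follows from differentiating under the integral and applying \req{C_increment}, which is precisely your chain-rule computation). The derivation of $C_\nu'(u)=\frac{\nu}{u}C_\nu(u)-C_{\nu+1}(u)$ from \req{C_increment} and the identification of the resulting integral as $H^{N+1}_{\nu+1}$ are both accurate.
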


Using \req{K_prod_formula} the base cases for \req{F_def} can be evaluated.
\begin{lemma}
\begin{align}\label{base_cases}
F_\nu^0(z)=\frac{1}{2}K_{\nu/2}(z/2)^2,\hspace{2mm} F_\nu^1(z)=\frac{1}{2}K_{\frac{\nu+1}{2}}(z/2)K_{\frac{\nu-1}{2}}(z/2).
\end{align}
\end{lemma}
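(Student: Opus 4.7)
The plan is to derive both identities as direct specializations of Lemma \ref{Watson_int2} (Watson's product formula), with appropriate choices of the parameters $a,b$ and a rescaling of the variable $z$.

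First I would rewrite \req{K_prod_formula} with $z$ replaced by $z/2$, so that the left-hand side becomes $2\int_0^\infty K_{a+b}(z\cosh(t))\cosh((a-b)t)\,dt$, matching the argument of $K_\nu$ that appears in the definition \req{F_def} of $F_\nu^N$. Now I need to choose $a,b$ so that $a+b=\nu$ and the hyperbolic factor $\cosh((a-b)t)$ reproduces $\cosh^N(t)$ for $N=0$ and $N=1$.

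For the $N=0$ case I would take $a=b=\nu/2$: then $a+b=\nu$ and $\cosh((a-b)t)=\cosh(0)=1$, which is exactly the integrand of $F_\nu^0$. Substituting into the rescaled version of \req{K_prod_formula} yields $2F_\nu^0(z)=K_{\nu/2}(z/2)^2$, which gives the first identity. For the $N=1$ case I would take $a=(\nu+1)/2$ and $b=(\nu-1)/2$, so that $a+b=\nu$ and $a-b=1$, hence $\cosh((a-b)t)=\cosh(t)$, which is exactly the integrand of $F_\nu^1$. The rescaled formula then gives $2F_\nu^1(z)=K_{(\nu+1)/2}(z/2)K_{(\nu-1)/2}(z/2)$, which is the second identity.

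There is no real obstacle here; the entire content of the lemma is the observation that the base cases $N=0,1$ are precisely the two linear combinations of $\cosh(kt)$ that appear as kernels in Lemma \ref{Watson_int2}, so a single application in each case suffices. The only small check worth noting is that the identification requires $\Re(z)>0$, which is ensured by the hypothesis of the lemma and guarantees convergence of the integrals on both sides.
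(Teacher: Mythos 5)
Your proposal is correct and is exactly the argument the paper intends: the lemma is presented as an immediate consequence of Lemma \ref{Watson_int2}, obtained by replacing $z$ with $z/2$ and choosing $a=b=\nu/2$ for $N=0$ and $a=(\nu+1)/2$, $b=(\nu-1)/2$ for $N=1$. Your write-up simply makes explicit the substitutions the paper leaves to the reader.
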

Of particular interest are the cases $N=0$ and $\nu$ odd or $N=1$ and $\nu$ even.  In these cases the results involve modified Bessel functions of half odd integer order, which reduce to finite linear combinations of the terms $e^{-z}z^{-1/2-n}$.  In particular, we will need the following result.
\begin{lemma}\label{K_odd}
$K_{\pm 1/2}(z)=\left(\frac{\pi}{2}\right)^{1/2}z^{-1/2}e^{-z}$ and for $n=1,2,...$, 
\begin{equation}\label{K_leading_order}
K_{\pm(n+1/2)}(z)=\left(\frac{\pi}{2}\right)^{1/2}z^{-1/2}e^{-z}\left(\prod_{j=1}^{n-1}(2(n-j)+1)(z^{-n}+z^{-(n-1)})+R_{n-2}(1/z)\right)
\end{equation}
 where $R_{n-2}$ is a polynomial of degree at most $n-2$.
\end{lemma}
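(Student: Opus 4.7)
The plan is to use the symmetry $K_{-\nu}(z)=K_\nu(z)$ (immediate from \req{BesselK_def}) to reduce to non-negative half-integer indices, then proceed by induction on $n$ using the standard three-term recurrence for modified Bessel functions. For the opening statement, $K_{1/2}(z)=\sqrt{\pi/(2z)}\,e^{-z}$ follows by setting $\nu=1/2$ in \req{BesselK_def}, making the substitution $u=e^t$, and evaluating the resulting elementary Laplace integrals.

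For the inductive claim, combining \req{C_increment} with its companion identity $\frac{d}{dz}[z^{\nu}K_\nu(z)]=-z^{\nu}K_{\nu-1}(z)$ (both standard consequences of \req{bessel_eq}) gives the three-term recurrence
\begin{equation*}
K_{\nu+1}(z)=K_{\nu-1}(z)+\frac{2\nu}{z}K_\nu(z).
\end{equation*}
Writing $K_{n+1/2}(z)=\sqrt{\pi/(2z)}\,e^{-z}P_n(1/z)$ and specializing to $\nu=n+1/2$, this becomes the polynomial recurrence
\begin{equation*}
P_{n+1}(x)=P_{n-1}(x)+(2n+1)\,x\,P_n(x),\qquad P_0(x)=1,\ P_1(x)=1+x,
\end{equation*}
from which it follows by induction that $P_n$ has degree exactly $n$ in $x$.

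Expanding $P_n(x)=a_n x^n+b_n x^{n-1}+(\text{degree}\le n-2)$, I need to show $a_n=b_n=\prod_{j=1}^{n-1}(2(n-j)+1)$. Since $\deg P_{n-1}=n-1$, it contributes nothing to the $x^{n+1}$ or $x^n$ coefficient of $P_{n+1}$, so the recurrence forces $a_{n+1}=(2n+1)a_n$ and $b_{n+1}=(2n+1)b_n$; with $a_1=b_1=1$, induction gives $a_n=b_n=3\cdot 5\cdots(2n-1)$, matching the displayed product. The remaining terms collapse into $R_{n-2}(1/z)$, a polynomial of degree at most $n-2$ (with $R_{-1}\equiv 0$ covering the $n=1$ case, consistent with $P_1(x)=1+x$). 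No substantial obstacle arises: the proof is essentially bookkeeping on polynomial coefficients, and the only point requiring a moment's care is verifying that the top two coefficients of $P_{n+1}$ are entirely determined by $P_n$ and unaffected by $P_{n-1}$, which is immediate from degree counting.
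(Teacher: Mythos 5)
Your proof is correct, but it takes a different route from the paper. The paper's proof is a one-line appeal to Watson's closed form $K_{\pm(n+1/2)}(z)=\sqrt{\pi/(2z)}\,e^{-z}\sum_{k=0}^n \frac{(n+k)!}{2^k k!(n-k)!}z^{-k}$, from which the two leading coefficients ($k=n$ and $k=n-1$) are read off directly, each equaling $(2n-1)!!=\prod_{j=1}^{n-1}(2(n-j)+1)$. You instead derive the same structure inductively from the three-term recurrence $K_{\nu+1}=K_{\nu-1}+\frac{2\nu}{z}K_\nu$, reducing it to the polynomial recurrence $P_{n+1}(x)=P_{n-1}(x)+(2n+1)xP_n(x)$ and observing that the top two coefficients of $P_{n+1}$ are untouched by $P_{n-1}$ for degree reasons. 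Your argument is self-contained modulo the recurrence and the $K_{1/2}$ base case, both of which mesh naturally with the machinery the paper already uses (\req{C_increment} and its companion); the paper's citation is shorter and hands you the entire polynomial rather than only its two leading coefficients, which is all the lemma actually asserts. One small imprecision: the ladder identities are not consequences of the differential equation \req{bessel_eq} alone (they distinguish $K_\nu$ from $I_\nu$ within the solution space); they follow from the integral representation \req{BesselK_def} or can simply be cited from Watson, as the paper does for \req{C_increment}. This does not affect the validity of your argument.
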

\begin{proof}
This is a consequence of 
\begin{equation}
K_{\pm(n+1/2)}(z)=\sqrt{\frac{\pi}{2z}}e^{-z}\sum_{k=0}^n \frac{(n+k)!z^{-k}}{2^kk!(n-k)!}.
\end{equation}
where  $n$ is a non-negative integer \cite{Watson}.
\end{proof}

Combining \req{K_leading_order} with \req{base_cases} implies
\begin{lemma}\label{F_leading_order}
$F^0_1(z)=\frac{\pi}{2}z^{-1}e^{-z}$ and for $n>0$,
\begin{align}
F^0_{2n+1}(z)=&\frac{2^{2n}\pi\prod_{j=1}^{n-1}(2(n-j)+1)^2}{2}z^{-1}e^{-z}\left(z^{-2n}+z^{-(2n-1)}+R_{2n-2}(1/z)\right),
\end{align}
\begin{equation}\label{F10_F12}
F^1_0(z)=\frac{\pi}{2}z^{-1}e^{-z},\hspace{2mm} F^1_2(z)=\frac{\pi}{2}z^{-1}e^{-z}(1+2z^{-1}),
\end{equation}
and for $n>1$,
\begin{align}
F^1_{2n}(z)=&\frac{2^{2n-1}\pi\prod_{j=1}^{n-1}(2(n-j)+1)^2}{2(2n-1)}z^{-1}e^{-z}\left(z^{-(2n-1)}+z^{-(2n-2)}+R_{2n-3}(1/z)\right).
\end{align}
In the above formulas, $R_{2n-2}$ and $R_{2n-3}$ denote polynomials of degrees at most $2n-2$ and $2n-3$ respectively.
\end{lemma}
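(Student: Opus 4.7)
The strategy is to plug the base cases from Lemma \ref{base_cases} into the explicit formula for the half-integer-order modified Bessel functions recorded in Lemma \ref{K_odd}, and then multiply or square the resulting polynomials, tracking only the two leading terms and absorbing everything of lower order into the remainder polynomial $R$. Concretely, the base cases reduce $F^0_{2n+1}$ to $\tfrac12 K_{n+1/2}(z/2)^2$ and $F^1_{2n}$ (for $n\ge 1$) to $\tfrac12 K_{n+1/2}(z/2)K_{n-1/2}(z/2)$, and in each case the relevant Bessel functions have half-odd-integer order, so Lemma \ref{K_odd} applies.

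The first step is to substitute $z\mapsto z/2$ in the formula of Lemma \ref{K_odd}. This produces the prefactor $\sqrt{\pi/z}\,e^{-z/2}$ and replaces each $w^{-k}$ by $2^k z^{-k}$, so that with $A(m)\equiv\prod_{j=1}^{m-1}(2(m-j)+1)$ one gets
\begin{equation*}
K_{m+1/2}(z/2)=\sqrt{\pi/z}\,e^{-z/2}\bigl(A(m)(2^m z^{-m}+2^{m-1}z^{-(m-1)})+R_{m-2}(2/z)\bigr).
\end{equation*}
Squaring (for the $F^0_{2n+1}$ case with $m=n$) or multiplying the $m=n$ and $m=n-1$ expressions (for the $F^1_{2n}$ case) turns $\sqrt{\pi/z}\,e^{-z/2}$ into $(\pi/z)e^{-z}$, which combines with the $1/2$ in the base case to give the stated $\tfrac{\pi}{2}z^{-1}e^{-z}$ overall prefactor. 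The trivial cases $F^0_1$, $F^1_0$, and $F^1_2$ correspond to $n=0$ or $n=1$, where $R_{-1}$ or $R_{n-2}$ is absent; these are verified by direct substitution using $K_{\pm 1/2}(w)=\sqrt{\pi/(2w)}\,e^{-w}$ and $K_{3/2}(w)=\sqrt{\pi/(2w)}\,e^{-w}(1+w^{-1})$.

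The core computational step is extracting the leading two coefficients. For $F^0_{2n+1}$, squaring the leading part gives
\begin{equation*}
\bigl[A(n)(2^n z^{-n}+2^{n-1}z^{-(n-1)})\bigr]^2=A(n)^2\bigl(2^{2n}z^{-2n}+2^{2n}z^{-(2n-1)}+2^{2n-2}z^{-(2n-2)}\bigr),
\end{equation*}
where, crucially, the cross term produces the same coefficient $A(n)^2 2^{2n}$ as the $z^{-2n}$ term, which is exactly what is needed to factor out the prefactor stated in the lemma. For $F^1_{2n}$, the analogous expansion of the product of leading parts gives coefficient $A(n)A(n-1)\,2^{2n-1}$ for both $z^{-(2n-1)}$ and $z^{-(2n-2)}$ (the latter again assembled from two equal cross terms). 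The identity $A(n)A(n-1)=A(n)^2/(2n-1)$, immediate from the telescoping of the odd-number product, then produces the prefactor $2^{2n-1}\pi A(n)^2/(2(2n-1))$ as stated.

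The remaining step is bookkeeping: confirming that every contribution not already accounted for has degree in $1/z$ strictly below the two leading powers, and can therefore be absorbed into $R_{2n-2}$ or $R_{2n-3}$. Since the leading polynomial has degree $n$ and the tail $R_{n-2}(2/z)$ has degree at most $n-2$, the missed contributions in the $F^0$ case have degree at most $2n-2<2n-1$, and in the $F^1$ case have degree at most $(n-1)+(n-2)=2n-3<2n-2$. No real obstacle is anticipated; the only mild subtlety is the bookkeeping of the $n=1$ edge cases for $F^1_{2n}$, which is why $F^1_2$ is stated separately (there $R_{n-3}$ does not exist and the base formula for $K_{1/2}$ must be used directly).
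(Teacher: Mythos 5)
Your proof is correct and follows exactly the route the paper intends: the paper gives no explicit proof of this lemma, simply stating that it follows by "combining \req{K_leading_order} with \req{base_cases}", which is precisely the substitution, squaring/multiplication, and leading-coefficient bookkeeping you carry out. Your computation of the equal leading coefficients via the cross terms, the identity $A(n)A(n-1)=A(n)^2/(2n-1)$, and the separate handling of the $n=0,1$ edge cases are all accurate.
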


 Using Lemma \ref{F_leading_order} and inducting via the recursion relation \req{F_recursion} yields the following.
\begin{lemma}
For $N$ odd and $\nu$ even or vice versa,
\begin{equation}\label{Q_def}
F_\nu^N(z)= z^{-1}e^{-z}Q_{N,\nu}(1/z)
\end{equation}
where the $Q_{N,\nu}$ are polynomials whose degrees satisfy the following bounds for non-negative integers $m$ and $n$,

\begin{align}\label{max_poly_degrees}
&\deg  Q_{2m,2n+1}\leq 2(n+m),\hspace{2mm} \deg Q_{2m+1,0}\leq 2m,\\
& \deg Q_{2m+1,2n}\leq 2(n+m)-1 \text{ for } n>0.
\end{align}
\end{lemma}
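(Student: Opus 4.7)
The plan is to prove the lemma by induction on $N$ in increments of two, using the recursion \req{F_recursion} specialized to the modified-Bessel (lower) sign, which reads $F^{N+2}_\nu(z) = \mathcal{L}F^N_\nu(z)$ with $\mathcal{L}=\frac{d^2}{dz^2}+\frac{1}{z}\frac{d}{dz}-\frac{\nu^2}{z^2}$. The parity pair $(N\bmod 2,\nu\bmod 2)$ is preserved by $N\mapsto N+2$, so the three families of base cases supplied by Lemma~\ref{F_leading_order}---namely $(N,\nu)=(0,2n+1)$, $(1,0)$, and $(1,2n)$ for $n>0$---propagate independently and together cover exactly the hypothesis that $N$ and $\nu$ have opposite parities.

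The structural heart of the argument is a degree-shift sub-claim: \emph{if $g(z)=z^{-1}e^{-z}Q(1/z)$ with $Q$ a polynomial of degree $d$, then $\mathcal{L}g(z)=z^{-1}e^{-z}\tilde{Q}(1/z)$ with $\tilde{Q}$ a polynomial of degree at most $d+2$}. I would prove this by a direct computation: writing $g(z)=e^{-z}\sum_{k=0}^d q_k z^{-(k+1)}$, differentiating, and collecting by powers of $1/z$ gives
\[
\mathcal{L}g(z)=z^{-1}e^{-z}\sum_{k=0}^d\bigl[q_k z^{-k}+(2k+1)q_k z^{-(k+1)}+((k+1)^2-\nu^2)q_k z^{-(k+2)}\bigr],
\]
whose highest power of $1/z$ is $d+2$ (with coefficient $((d+1)^2-\nu^2)q_d$). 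In particular, $\mathcal{L}$ preserves the class of functions of the form $z^{-1}e^{-z}\times(\text{polynomial in }1/z)$, so the structural form \req{Q_def} is inherited automatically at every step of the induction.

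Combining the sub-claim with the recursion, each application of $\mathcal{L}$ raises the polynomial degree by at most two. Reading off the base-case degrees from Lemma~\ref{F_leading_order} yields $\deg Q_{0,2n+1}\leq 2n$, $\deg Q_{1,0}=0$, and $\deg Q_{1,2n}\leq 2n-1$ for $n>0$; applying $\mathcal{L}$ a further $m$ times then gives $\deg Q_{2m,2n+1}\leq 2n+2m=2(n+m)$, $\deg Q_{2m+1,0}\leq 2m$, and $\deg Q_{2m+1,2n}\leq 2n-1+2m=2(n+m)-1$, matching \req{max_poly_degrees}. There is no real obstacle; the only substantive content is the short algebraic expansion in the sub-claim, after which the degree bookkeeping across the three parity classes is immediate.
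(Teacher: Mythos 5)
Your proposal is correct and follows essentially the same route as the paper: start from the base-case degrees in Lemma \ref{F_leading_order} and observe that each application of the recursion \req{F_recursion} (the $N\mapsto N+2$ one, with the lower sign) preserves the form $z^{-1}e^{-z}Q(1/z)$ and raises $\deg Q$ by at most $2$. Your explicit term-by-term expansion of $\mathcal{L}\bigl(e^{-z}z^{-(k+1)}\bigr)$ simply fills in the one-line computation the paper leaves implicit, and your degree bookkeeping across the three parity classes matches the paper's.
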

\begin{proof}
Starting with the degrees of the base cases from Lemma \ref{F_leading_order}, the recursion relation \req{F_recursion2} implies that the degree increments by at most $2$ every time $N$ is increased by $2$.
\end{proof}
 In the following section we  show that  there are additional cancellations which cause the $Q_{j,k}$'s to have less than the maximum possible degree indicated here.

Before moving on,  note that another family of integrals of the form \req{H_def} for which similar results can be obtained using the recursion relation \req{F_recursion} is
\begin{equation}
H^N_\nu(z)=\int_0^{\pi/2} \cos^N(\theta)J_\nu(z\cos(\theta))d\theta,
\end{equation}
where the base cases are established via the following identity, found in  Watson $5\!\cdot\! 43$ (1) \cite{Watson}.
\begin{lemma}
Let $\Re(\mu+\nu)>-1$.  Then
\begin{equation}
\frac{2}{\pi}\int_0^{\pi/2}J_{\mu+\nu}(2z\cos(\theta))\cos((\mu-\nu)\theta)d\theta=J_\mu(z)J_\nu(z).
\end{equation}
\end{lemma}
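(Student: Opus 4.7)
The plan is to mimic the power-series derivation standard for Neumann-type product identities (and in spirit parallel to the modified case of Lemma 1.1): expand the Bessel function inside the integral as a Maclaurin series in $z$, swap sum and integral, evaluate the trigonometric integrals that arise, and match term-by-term with the Cauchy product expansion of $J_\mu(z)J_\nu(z)$.

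First I would substitute the series
$$J_{\mu+\nu}(2z\cos\theta) = \sum_{k=0}^\infty \frac{(-1)^k(z\cos\theta)^{\mu+\nu+2k}}{k!\,\Gamma(\mu+\nu+k+1)},$$
whose convergence is uniform for $\theta\in[0,\pi/2]$ at fixed $z$, so interchanging sum and integral is legitimate. The $k$-th term involves
$$I_k = \int_0^{\pi/2}\cos^{\mu+\nu+2k}(\theta)\cos((\mu-\nu)\theta)\,d\theta,$$
which I would evaluate by the classical beta-function identity
$$\int_0^{\pi/2}\cos^p(\theta)\cos(q\theta)\,d\theta = \frac{\pi\,\Gamma(p+1)}{2^{p+1}\,\Gamma\!\left(\tfrac{p+q}{2}+1\right)\Gamma\!\left(\tfrac{p-q}{2}+1\right)},\qquad \Re(p)>-1.$$
Taking $p=\mu+\nu+2k$, $q=\mu-\nu$, the hypothesis $\Re(\mu+\nu)>-1$ guarantees validity for every $k\ge 0$. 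Collecting constants yields
$$\frac{2}{\pi}\int_0^{\pi/2}J_{\mu+\nu}(2z\cos\theta)\cos((\mu-\nu)\theta)\,d\theta = \sum_{k=0}^\infty\frac{(-1)^k(z/2)^{\mu+\nu+2k}\,\Gamma(\mu+\nu+2k+1)}{k!\,\Gamma(\mu+\nu+k+1)\Gamma(\mu+k+1)\Gamma(\nu+k+1)}.$$

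On the other hand, the Cauchy product of the Maclaurin series for $J_\mu(z)$ and $J_\nu(z)$ reads
$$J_\mu(z)J_\nu(z) = \sum_{k=0}^\infty(-1)^k(z/2)^{\mu+\nu+2k}\sum_{i+j=k}\frac{1}{i!\,j!\,\Gamma(\mu+i+1)\Gamma(\nu+j+1)}.$$
Matching coefficients of $(z/2)^{\mu+\nu+2k}$ in the two expressions reduces the lemma to the gamma-function identity
$$\sum_{i+j=k}\frac{1}{i!\,j!\,\Gamma(\mu+i+1)\Gamma(\nu+j+1)} = \frac{\Gamma(\mu+\nu+2k+1)}{k!\,\Gamma(\mu+\nu+k+1)\Gamma(\mu+k+1)\Gamma(\nu+k+1)}.$$
I expect this combinatorial step to be the main obstacle. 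After clearing gamma factors it becomes the Pochhammer-symbol identity
$$\sum_{i=0}^k \binom{k}{i}(\mu+i+1)_{k-i}(\nu+k-i+1)_i = (\mu+\nu+k+1)_k,$$
an instance of the Chu--Vandermonde convolution, which can be verified by recognizing the inner sum as a terminating ${}_2F_1(-k,\,\cdot\,;\,\cdot\,;1)$ and invoking the Chu--Vandermonde theorem. Once this identity is in place, the two power series agree term-by-term and the lemma follows.
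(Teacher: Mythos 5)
Your proof is correct. Note that the paper itself offers no proof of this lemma: it is quoted verbatim from Watson, $5\!\cdot\!43$ (1), so there is nothing internal to compare against. Your derivation is in fact essentially Watson's own: expand $J_{\mu+\nu}(2z\cos\theta)$ termwise, evaluate each term with Cauchy's formula $\int_0^{\pi/2}\cos^p\theta\cos(q\theta)\,d\theta=\pi\Gamma(p+1)/\bigl(2^{p+1}\Gamma(\tfrac{p+q}{2}+1)\Gamma(\tfrac{p-q}{2}+1)\bigr)$, and recognize the resulting series as the product series $J_\mu(z)J_\nu(z)=\sum_k(-1)^k(z/2)^{\mu+\nu+2k}\Gamma(\mu+\nu+2k+1)/\bigl(k!\,\Gamma(\mu+\nu+k+1)\Gamma(\mu+k+1)\Gamma(\nu+k+1)\bigr)$ (Watson $5\!\cdot\!41$). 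The Pochhammer identity you isolate does reduce to a terminating ${}_2F_1(-k,-k-\nu;\mu+1;1)$ and Chu--Vandermonde closes it, so that step is sound. One small imprecision: when $-1<\Re(\mu+\nu)<0$ the individual terms of the series are unbounded as $\theta\to\pi/2$, so "uniform convergence on $[0,\pi/2]$" is not quite the right justification for the interchange; instead factor out $(z\cos\theta)^{\mu+\nu}$ (integrable by hypothesis) and apply dominated convergence, or note that $\sum_k\int_0^{\pi/2}|t_k|\,d\theta<\infty$ and invoke Fubini.
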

This direction will not be pursued further here.

\section{Proof of the Main Theorem.}\label{sec:poly_degree}
A proof of Theorem \ref{main_theorem} will be given in this section.  The main step involves  using the recursion relations derived in the prior section to prove the following bound on polynomial degrees.
\begin{lemma} \label{deg_lemma}
Recall the definition of the polynomials $Q_{N,\nu}$ for $N$ odd and $\nu$ even or vice versa, \req{Q_def}.  Their degrees satisfy
\begin{equation}\label{Q_deg_tight}
\deg Q_{N,\nu}\leq\begin{cases} \nu-1 &\mbox{if } N<\nu \\ 
 N-1& \mbox{if } N>\nu. \end{cases}\\
\end{equation}

For $N$ odd and $\nu$ even or vice versa,
\begin{equation}\label{P_def}
G_{\nu}^N(z)=z^{-2}e^{-z}P_{N,\nu}(1/z)
\end{equation}
 where $P_{N,\nu}$ are polynomials with  
\begin{equation}\label{P_deg_tight}
\deg P_{N,\nu}\leq \deg Q_{N,\nu}-1.
\end{equation}
\end{lemma}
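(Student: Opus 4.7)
My plan is to establish both parts of the lemma by descending from the recursions \req{F_recursion} and \req{F_recursion2} to coefficient-level recursions, and then building each $Q_{N,\nu}$ from an appropriate base case of Lemma \ref{F_leading_order} along an explicit path, tracking the degree at every step. Substituting the ansatz $F_\nu^N(z)=z^{-1}e^{-z}Q_{N,\nu}(1/z)$ into \req{F_recursion} and regrouping powers of $1/z$ yields, for $Q_{N,\nu}(x)=\sum_s q_s^{(N,\nu)} x^s$,
\[
q_s^{(N+2,\nu)} = q_s^{(N,\nu)} + (2s-1)\,q_{s-1}^{(N,\nu)} + \bigl((s-1)^2-\nu^2\bigr)\,q_{s-2}^{(N,\nu)},
\]
and an analogous substitution into \req{F_recursion2} gives $q_s^{(N+1,\nu+1)} = q_s^{(N,\nu)} + (\nu+s)\,q_{s-1}^{(N,\nu)}$. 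Reading off the leading-coefficient equations, \req{F_recursion2} always raises the degree by exactly one since its factor $\nu+d+1$ is strictly positive, while \req{F_recursion} raises the degree by two except at the resonance $d=\nu-1$, where $(d+1)^2 - \nu^2 = (d+1-\nu)(d+1+\nu)$ vanishes.

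I would then reach each $(N,\nu)$ with opposite parity along the following explicit paths. For $\nu \geq N+1$, iterate \req{F_recursion2} $N$ times from $(0,\nu-N)$, which has degree $(\nu-N)-1$ by Lemma \ref{F_leading_order}, arriving at $(N,\nu)$ with degree $\nu-1$. For $N=\nu+1$, iterate \req{F_recursion2} $\nu$ times from $(1,0)$ (degree $0$), arriving at $(\nu+1,\nu)$ with degree $\nu$. For $N \geq \nu+3$, continue from $(\nu+1,\nu)$ by $(N-\nu-1)/2$ applications of \req{F_recursion}; along this segment $d = \nu,\nu+2,\nu+4,\ldots$, so $d+1-\nu \geq 1$ throughout, the factor $(d+1-\nu)(d+1+\nu)$ is strictly positive, the leading coefficient never vanishes, and the degree rises by two at each step, terminating at $N-1$. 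In every case the endpoint degree equals $M_{N,\nu}$, giving \req{Q_deg_tight}.

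For the $G_\nu^N$ statement, the decomposition $G_\nu^N = F_\nu^{N+2}-F_\nu^N$ forces $P_{N,\nu}(x) = \bigl(Q_{N+2,\nu}(x)-Q_{N,\nu}(x)\bigr)/x$. Evaluating the coefficient recursion at $s=0$ yields $q_0^{(N+2,\nu)} = q_0^{(N,\nu)}$, so the numerator has zero constant term and $P_{N,\nu}$ is indeed a polynomial, of degree at most $\deg Q_{N+2,\nu}-1$, consistent with $K_{N,\nu} = M_{N+2,\nu}-1$ from the main theorem. The main obstacle is the bookkeeping in the path construction, especially ruling out an accidental vanishing of the leading coefficient in the final \req{F_recursion}-segment; this is exactly the content of the positivity of $(d+1-\nu)(d+1+\nu)$ noted above, which guarantees that the degree marches up by two at every step of that segment.
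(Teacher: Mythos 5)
Your proof is correct, and for the key bound \req{Q_deg_tight} it takes a genuinely different route from the paper's. The paper holds $\nu$ fixed and climbs in $N$ by repeated application of \req{F_recursion}, which generically raises the degree by two; it must then exhibit a double cancellation --- the would-be top coefficient $Q_{N+2,\nu}^{\nu+1}$ vanishes identically once $\deg Q_{N,\nu}\le\nu-1$, while $Q_{N+2,\nu}^{\nu}=(2\nu-1)(Q_{N,\nu}^{\nu-1}-Q_{N,\nu}^{\nu-2})$ --- and then run a coupled induction on $\nu$ via \req{F_recursion2} to show that the obstruction $Q_{N,\nu}^{\nu-1}\neq Q_{N,\nu}^{\nu-2}$ first occurs exactly at $N=\nu$ (the quantity $L_\nu$). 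You sidestep that analysis entirely by reaching $(N,\nu)$ along a diagonal path: for $N<\nu$ you start at $(0,\nu-N)$, whose degree $\nu-N-1$ is known exactly from Lemma \ref{F_leading_order} (note $\nu-N$ is odd by the parity hypothesis, so that lemma applies), and apply \req{F_recursion2} $N$ times; since the leading-coefficient factor $\nu+d+1$ never vanishes, the degree rises by exactly one per step and lands at $\nu-1$, with no cancellation to track. For $N>\nu$ you go diagonally from $(1,0)$ to $(\nu+1,\nu)$ and only then invoke \req{F_recursion}, where $d\ge\nu$ keeps $(d+1-\nu)(d+1+\nu)$ strictly positive. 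What the paper's route buys is an explanation of \emph{where} the cancellation happens for fixed $\nu$ (the $L_\nu=\nu$ statement); what yours buys is brevity, plus the stronger conclusion that the degree bounds are attained exactly. Your treatment of $G_\nu^N$ via $q_0^{(N+2,\nu)}=q_0^{(N,\nu)}$ is equivalent to the paper's check that $b_{0,N+2}=b_{0,N}$ in the asymptotic expansion, and is arguably more self-contained.

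One caveat, which applies equally to the paper's own argument: what is actually established is $\deg P_{N,\nu}\le\deg Q_{N+2,\nu}-1=M_{N+2,\nu}-1=K_{N,\nu}$, not the literal inequality \req{P_deg_tight} with $\deg Q_{N,\nu}$ on the right. For $N>\nu$ the latter would give $\deg P_{N,\nu}\le N-2$, which cannot be right in general (for $N=1$, $\nu=0$ it would force $P_{1,0}\equiv 0$, i.e.\ $G_0^1\equiv 0$, contradicting positivity of the integrand). The bound you prove is the one Theorem \ref{main_theorem} actually uses, so this is a defect in the lemma's statement rather than a gap in your proof, but it is worth flagging.
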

\begin{proof}
The case $\nu=0$ follows from the  bound \req{max_poly_degrees}. $\nu=1$ is the first example where  nontrivial cancellation occurs. Using \req{F_recursion} one finds
\begin{align}
F^0_1(z)=&\frac{\pi}{2}z^{-1}e^{-z},\hspace{2mm}  F^2_1(z)=\frac{\pi}{2}z^{-1}e^{-z}(1+1/z).
\end{align}
Note that the order of the polynomial increases by one, not two as would be naively expected from \req{max_poly_degrees}.  The recursive formula \req{F_recursion} implies that the degree can increment by at most $2$ for each subsequent increase in $N$ by $2$. Explicit computation shows that it does increase by $2$, at least initially.  This proves the lemma for $\nu=1$.

We will assume $\nu>1$ for the remainder of this proof.  Using the recursion \req{F_recursion} gives
\begin{align}
F^{N+2}_{\nu}(z)=&z^{-1}e^{-z}\left[((1-\nu^2)z^{-2}+z^{-1}+1)Q_{N,\nu}(1/z)\right.\notag\\
&\left.+(3z^{-3}+2z^{-2})Q^\prime_{N,\nu}(1/z)+z^{-4}Q^{\prime\prime}_{N,\nu}(1/z)\right]\\
= &z^{-1}e^{-z} Q_{N+2,\nu}.\notag
\end{align}

 Suppose $\deg Q_{N,\nu}\leq \nu-1$ for some $N$.  Note that the bounds \req{max_poly_degrees} imply that this holds initially i.e.
\begin{equation}\label{min_deg_bound}
\deg Q_{N_{\min},\nu}\leq \nu-1
\end{equation}
where $N_{\min}=0$ for $\nu$ odd and $N_{\min}=1$ for $\nu$ even.

Denoting the coefficients of a polynomial $Q$ by $Q^j$ we therefore have
\begin{align}
Q_{N+2,\nu}^{\nu+1}=&(1-\nu^2)Q_{N,\nu}^{\nu-1}+3(Q^\prime_{N,\nu})^{\nu-2}+(Q^{\prime\prime}_{N,\nu})^{\nu-3}\notag\\
=&[(1-\nu^2)+3(\nu-1)+(\nu-1)(\nu-2)]Q_{N,\nu}^{\nu-1}\\
=&0\notag
\end{align}
and
\begin{align}
Q_{N+2,\nu}^{\nu}=&(1-\nu^2)Q_{N,\nu}^{\nu-2}+Q_{N,\nu}^{\nu-1}+3(Q^\prime_{N,\nu})^{\nu-3}+2(Q^\prime_{N,\nu})^{\nu-2}+(Q^{\prime\prime}_{N,\nu})^{\nu-4}\notag\\
=&(2\nu-1)(Q^{\nu-1}_{N,\nu}-Q_{N,\nu}^{\nu-2})
\end{align}
where polynomial coefficients of negative degree are defined to be zero.

Therefore $Q_{N+2,\nu}$ has degree at most $\nu$ and if $Q_{N,\nu}^{\nu-1}=Q_{N,\nu}^{\nu-2}$ then the degree is in fact still bounded by $\nu-1$.  This allows us to conclude the following.
\begin{lemma}
For $\nu>1$ let $L_\nu$ denote the first positive integer that is even if $\nu$ is even and odd if $\nu$ is odd such that  $Q_{L_\nu-1,\nu}^{\nu-1}\neq Q_{L_\nu-1,\nu}^{\nu-2}$. Then 
\begin{align}
\deg Q_{N,\nu}\leq \nu-1 \text{ for } N< L_\nu,\hspace{2mm}   \deg Q_{N,\nu}\leq \nu+N-(L_\nu+1)\text{ for } N>L_\nu.
\end{align}
\end{lemma}
Therefore, if $L_\nu=\nu$ for $\nu\geq 2$ then the proof of \req{Q_deg_tight} will be complete.  This will be shown by induction on $\nu$.

From \req{F10_F12}, $L_2=2$ so the base case is proven.  Suppose $L_\nu=\nu$ for some $\nu\geq 2$. Using \req{F_recursion2} gives
\begin{align}
Q_{N+1,\nu+1}(1/z)=[(\nu+1)z^{-1} +1] Q_{N,\nu}(1/z)+z^{-2}  Q^\prime_{N,\nu}(1/z).
\end{align}
Hence,  for $N<\nu$
\begin{align}
Q_{N+1,\nu+1}^{\nu}=&(1+\nu) Q_{N,\nu}^{\nu-1} +Q_{N,\nu}^\nu+(Q^\prime_{N,\nu})^{\nu-2}\\
=&2\nu Q_{N,\nu}^{\nu-1}\notag
\end{align}
and
\begin{align}
Q_{N+1,\nu+1}^{\nu-1}=&(\nu+1)Q_{N,\nu}^{\nu-2}+ Q_{N,\nu}^{\nu-1}+(\nu-2) Q_{N,\nu}^{\nu-2}\\
=&(2\nu-1)Q_{N,\nu}^{\nu-2}+ Q_{N,\nu}^{\nu-1}.\notag
\end{align}
Therefore
\begin{align}
Q_{N+1,\nu+1}^{\nu}-Q_{N+1,\nu+1}^{\nu-1}=-(Q_{N,\nu}^{\nu-1}-Q_{N,\nu}^{\nu-2}).
\end{align}
This proves $Q^{\nu}_{N+1,\nu+1}=Q_{N+1,\nu+1}^{\nu-1}$ for $N+1\leq\nu$ and  $Q^{\nu}_{\nu,\nu+1}\neq Q_{\nu,\nu+1}^{\nu-1}$.  Note that for $\nu$ even, Lemma \ref{F_leading_order} is needed to show that $Q^{\nu}_{0,\nu+1}=Q_{0,\nu+1}^{\nu-1}$, as the recursion misses this case. This proves $L_\nu=\nu$ for all $\nu\geq 2$ by induction. This concludes the proof of \req{Q_deg_tight}.

Next we evaluate $G_\nu^N(z)=F_\nu^{N+2}(z)-F_\nu^N(z)$.  Using \req{F_formula} yields
\begin{align}
G_\nu^N(z)=z^{-1}e^{-z}&\left(\sum_{s=0}^{M_{N+2,\nu}}\left[\sum_{n+r=s}a_n\Gamma(r+1/2)\sum_{k+m=r}b_{k,N+2} c_{m,n}\right] z^{-s}\right.\\
&\left.-\sum_{s=0}^{M_{N,\nu}}\left[\sum_{n+r=s}a_n\Gamma(r+1/2)\sum_{k+m=r}b_{k,N} c_{m,n}\right] z^{-s}\right).
\end{align}
 The $s=0$ term of the expression in parentheses is
\begin{equation}
a_0c_{0,0}\Gamma(1/2)(b_{0,N+2}-b_{0,N})=0.
\end{equation}
Therefore, another $z^{-1}$ term can be factored out to give
\begin{align}
G_\nu^N(z)=z^{-2}e^{-z}P_{N,\nu}(1/z) 
\end{align}
where $P_{N,\nu}$  is a polynomial with degree at most $M_{N+2,\nu}-1$.  This, together with the uniqueness of asymptotic expansions, proves \req{P_def} and \req{P_deg_tight} and completes the proof this lemma.
\end{proof}

We conclude by showing that Theorem \ref{main_theorem} is a simple consequence of the above lemma, combined with the asymptotic expansions from Corollary \ref{FG_asymp}. 

The uniqueness of  asymptotic expansions in the asymptotic sequence $e^{z}z^{-n}$~\cite{Olver} implies that the coefficients in the polynomials $Q_{N,\nu}$ and $P_{N,\nu}$ must agree with those in the asymptotic expansions \req{F_asymp} and \req{G_asymp} respectively.  The bounds \req{Q_deg_tight} and \req{P_deg_tight} on the polynomial degrees therefore imply Theorem \ref{main_theorem}.

\section*{Acknowledgments}
This work was motivated by the numerical discovery of some of the results presented here. These discoveries were made in the process of studying neutrino freeze-out in the early Universe in collaboration with Johann Rafelski and Cheng Tao Yang, see \cite{Birrell_const}. This work has been supported by US Department of Energy, Office of Science, Office of Nuclear Physics  under  award number DE-FG02-04ER41318 and was conducted with Government support under and awarded by DoD, Air Force Office of Scientific Research, National Defense Science and Engineering Graduate (NDSEG) Fellowship, 32 CFR 168a.

\appendix

\section{Neutrino Scattering}\label{app:neutrino}
In this appendix we outline the connection of the integrals \req{G_def} to neutrino scattering that motivated this paper.  The following is based on results given in \cite{Birrell_const}.

Consider $2$-body to $2$-body reactions between particles in kinetic equilibrium at a common temperature $T$ and with distinct fugacities $\Upsilon_i$, $i=1,2,3,4$. The reaction rate for the reaction $1+2\rightarrow 3+4$ in the Boltzmann limit is given by
\begin{align}
R=&(\Upsilon_1\Upsilon_2-\Upsilon_3\Upsilon_4)/\tau,\\
1/\tau=&\frac{T}{2^{10}\pi^5 }\int_{s_0}^\infty \frac{ rr^{'}}{\sqrt{s}}\left(\int_{-1}^1  \langle |\mathcal{M}|^2\rangle(s,t(y))dy\right) K_1( \sqrt{s}/T)ds,
\end{align}
where $\langle|\mathcal{M}|^2\rangle$ is the squared matrix element of the interaction, and $r,r^\prime,t(y)$ depend on the particulars of the reaction. See \cite{Birrell_dis,Birrell_const} for definitions and further physics background.

 The matrix element for electron-positron annihilation into neutrinos, $e^-+e^+\leftrightarrow\nu_e+\bar{\nu}_e$, is given by
\begin{align}
\langle|\mathcal{M}|^2\rangle=32G^2_F\bigg[&(1+2\eta)^2\frac{(s+t-m^2_e)^2}{4}+(2\eta)^2\frac{(m^2_e-t)^2}{4}+\eta(1+2\eta)m^2_es\bigg],
\end{align}
where $m_e$ is the electron mass, $\eta=\sin(\theta_w)^2$, $\theta_w$ is the Weinberg angle, and $G_F$ is Fermi's constant. For this process
\begin{align}
&r=\sqrt{s-4m_e^2},\hspace{2mm} r^{'}=\sqrt{s},\hspace{2mm} t(y)=\frac{1}{4}(2rr^{'}y-r^2-(r^{'})^2),\\
&\int_{-1}^1  \langle |\mathcal{M}|^2\rangle(s,t(y)) dy=\frac{16G_F^2}{3}s\left[(16\eta^2+8\eta-1)m_e^2+(8\eta^2+4\eta+1)s\right].\notag
\end{align}

Therefore, the rate constant in the  Boltzmann limit corresponding to this process is
\small
\begin{align}
&1/\tau_{e^-e^+\leftrightarrow\nu_e\bar{\nu}_e}\notag\\
=&\frac{T}{2^{10}\pi^5 }\int_{s_0}^\infty \frac{ rr^{'}}{\sqrt{s}}\left(\int_{-1}^1  \langle |\mathcal{M}|^2\rangle(s,t(y)) dy\right) K_1(\beta \sqrt{s})ds\\
=&\frac{G_F^2T}{3(2^{6})\pi^5 }\int_{4m_e^2}^\infty s\sqrt{s-4m_e^2}\left[(16\eta^2+8\eta-1)m_e^2+(8\eta^2+4\eta+1)s\right]K_1(\beta \sqrt{s})ds.\notag
\end{align}
\normalsize
Making the change of variables $z\cosh(t)=\sqrt{s}/T$, $z=2m_e/T$, the integral becomes
\begin{align}\label{ee_nunu_rate}
&1/\tau_{e^-e^+\leftrightarrow\nu_e\bar{\nu}_e}=\frac{ G^2_FT^8}{(2\pi)^5}z^7\int^\infty_1f(\cosh(t))K_1(z\cosh(t))\sinh^2(t)dt,\\
 &f(u)=Au^5+Bu^3,\hspace{2mm} A=\frac{1}{3}(8\eta^2+4\eta+1),\hspace{2mm} B=\frac{1}{3}(4\eta^2+2\eta-1/4).\notag
\end{align}
The integral \req{ee_nunu_rate} is of the form considered in \req{G_def} with both $N$ and $\nu$ odd.  Theorem \ref{main_theorem} does  not apply but Theorem \ref{asymp_approx} can still be used to obtain  the asymptotic expansions as $z\rightarrow\infty$ of the relevant integrals 
\begin{align}
G_1^3(z)=&\frac{\pi}{2}z^{-2}e^{-z}\left(1+\frac{9}{2}z^{-1}+\frac{81}{8}z^{-2}+\frac{165}{16}z^{-3}+O(z^{-4})\right),\\
G_1^5(z)=&\frac{\pi}{2}z^{-2}e^{-z}\left(1+\frac{15}{2}z^{-1}+\frac{285}{8}z^{-2}+\frac{1875}{16}z^{-3}+O(z^{-4})\right).
\end{align}
Using Lemma \ref{Watson_int2} along with the recursion relation \req{F_recursion} also gives the exact formulas
\begin{align}
G^{3}_1(z)=&\frac{1}{4}z^{-1}K_0(z/2)^2+2z^{-2}K_0(z/2)K_1(z/2)+\left(\frac{1}{4}+4z^{-2}\right)z^{-1}K_1(z/2)^2,\\
G^{5}_1(z)=&\left(\frac{1}{4}+6z^{-2}\right)z^{-1}K_0(z/2)^2+\left(\frac{7}{2}+48z^{-2}\right)z^{-2}K_0(z/2)K_1(z/2)\notag\\
&+\left(\frac{1}{4}+10z^{-2}+96z^{-4}\right)z^{-1}K_1(z/2)^2.
\end{align}


\begin{thebibliography}{10}


\bibitem{Watson}
G.N. Watson, 
\newblock {\em A Treatise on the Theory of Bessel Functions}.
\newblock Cambridge University Press, 1995.


\bibitem{Mathematica}
Wolfram Research, Inc., Mathematica, Version 10.0, Champaign, IL (2014).


\bibitem{Prudnikov}
A.P.Prudnikov, Yu.A. Brychkov and O.I.Marichev, \newblock {\em Integrals and Series: Special Functions,  Vol. 2}.
 \newblock Gordon and Breach,  1986


\bibitem{Olver}
F. Olver,
\newblock {\em Asymptotics and Special Functions}.
\newblock New York: Academic Press, 1974.

\bibitem{Birrell_dis}
J. Birrell,
\newblock {\em Non-Equilibrium Aspects of Relic Neutrinos: From Freeze-out to the Present Day}.
\newblock  Ph.D. Dissertaton, arXiv:1409.4500, (2014)



\bibitem{Birrell_const}
J. Birrell, C.T. Yang, J. Rafelski, {\em Nucl. Phys. B} {\bf 890}, 481 (2015)



\end{thebibliography}
\end{document}